\newcommand{\mc}[1]{\ensuremath{\mathcal{#1}}}
\newcommand{\bbox}{\rule{.08in}{.08in}}
\newcommand{\class}[1]{\textsf{#1}}
\newcommand{\com}[1]{}
\newcounter{myclaim}
\begin{document}
\title{Segment representation of a subclass of co-planar graphs}
\author{Mathew C. Francis \and Jan Kratochv\'{i}l \and Tom\'{a}\v{s}
Vysko\v{c}il}
\institute{Department of Applied Mathematics, Charles University,\\
Malostransk\'{e} N\'{a}m\v{e}st\'{i} 25, 11800 Praha 1, Czech Republic.\\
\{francis,honza,whisky\}@kam.mff.cuni.cz}
\maketitle
\bibliographystyle{plain}
\begin{abstract}
A graph is said to be a segment graph if its vertices can be mapped to line
segments in the plane such that two vertices have an edge between them if and
only if their corresponding line segments intersect. Kratochv\'{i}l and
Kub\v{e}na \cite{KratoKube} asked the question of whether the complements of
planar graphs are segment graphs. We show here that the complements of all
partial 2-trees are segment graphs.
\end{abstract}

\section{Introduction}
Given a family of sets $\mathcal{F}$, a simple, undirected graph $G(V,E)$ is
said to be an ``intersection graph of sets from $\mathcal{F}$'' if there exists
a function $f:V(G)\rightarrow \mathcal{F}$ such that for $u,v\in V(G)$
such that $u\not=v$, $uv\in E(G)\Leftrightarrow f(u)\cap f(v)\not=\emptyset$. We
let $IG(\mathcal{F})=\{G~|~G\mbox{ is an intersection
graph of sets from }\mathcal{F}\}$. When $\mathcal{F}$ is a collection of
geometric objects, $IG(\mathcal{F})$ is said to be a class of ``geometric
intersection graphs''. Some well-known classes of geometric intersection
graphs are:
$$\class{INT}=IG(\{\mbox{all intervals on the real line}\})\mbox{ (``Interval
graphs'')}$$
$$\class{STRING}=IG(\{\mbox{all simple curves in the plane}\})\mbox{ (``String
graphs'')}$$
$$\class{CONV}=IG(\{\mbox{all convex arc-connected regions in the plane}\})$$
$$\class{SEG}=IG(\{\mbox{all straight line segments in the plane}\})\mbox{
(``Segment
graphs'')}$$

Clearly, $\class{SEG}\subseteq \class{CONV}\subseteq \class{STRING}$. The last
inclusion follows from the fact that string graphs are exactly the intersection
graphs of arc-connected regions in the plane.
It was shown in \cite{KratoKube} that the complement of every planar graph is
in $\class{CONV}$. In the same paper, the authors pose the question of whether
the complement of every planar graph is in $\class{SEG}$. A positive answer to
this question would imply that the \class{MAXCLIQUE} problem for segment graphs
is NP-complete, thus resolving a long standing open problem raised by
Kratochv\'{i}l and Ne\v{s}et\v{r}il in 1990 \cite{KratoNese}. It is
worth noting that the question of whether every planar graph is in \class{SEG},
known as Scheinerman's conjecture, was resolved by Chalopin and Gon\c{c}alves
\cite{Chalopin} who showed that every planar graph is indeed the intersection
graph of line segments in the plane.

Partial 2-trees are a subclass of planar graphs that includes series-parallel
graphs and outerplanar graphs as proper subclasses. In this paper, we show
that the complement of every partial 2-tree is in $\class{SEG}$.

\section{Definitions}
All the graphs that we consider shall be finite, simple and undirected. We
denote the vertex set of a graph $G$ by $V(G)$ and its edge set by $E(G)$.
Given a graph $G$ and $X\subseteq V(G)$, we denote the subgraph induced by
$V(G)\setminus X$ in $G$ as $G-X$. The complement of a graph $G$, denoted as
$\overline{G}$, is the graph with $V(\overline{G})=V(G)$ and
$E(\overline{G})=\{uv~|~u\not=v\mbox{ and }uv\not\in E(G)\}$.
\subsection{Partial 2-trees}
A \emph{2-tree} is defined as follows:
\begin{enumerate}
\item A single edge is a 2-tree.
\item If $G$ is a 2-tree, then the graph $G'$ with $V(G')=V(G)\cup\{v\}$
and $E(G')=E(G)\cup \{vx,vy\}$ where $xy\in E(G)$ is a 2-tree.
\end{enumerate}
A \emph{partial 2-tree} is any spanning subgraph of a 2-tree.
\subsection{Segments, rays and compatible segment representations}
Let $a,b\in \mathbb{R}^2$ be two points in the plane. Then:
\begin{itemize}
\item A \emph{segment} with end-points $a$ and $b$, denoted as $ab$, is the set
$\{a+\rho(b-a)~|~\rho\in [0,1]\}$. Any point on a segment that is not one of its
end-points is said to be an \emph{interior} point of the segment. For the
purposes of this paper, we shall assume that every segment has a non-zero
length---i.e., the end-points of a segment may not coincide.

\item A \emph{ray} starting at a point $a$ and passing through a point $b$ is
the set $\{a+\rho(b-a)~|~\rho\in [0,\infty)\}$. A ray has a single end-point,
which is its starting point.
\end{itemize}

Given $l_1$ and $l_2$, where each could be a segment or a ray, they are said to
\emph{cross} each other if $l_1\cap l_2$ consists exactly of a single point
that is not an end-point of either $l_1$ or $l_2$.
If $l_1\cap l_2=\emptyset$, then they are said to be \emph{disjoint}.

Given a segment graph $G$, there exists a function $f:V(G)\rightarrow
\mc{R}$ such that $\forall u,v\in V(G), u\not=v$, $f(u)\cap
f(v)\not=\emptyset\Leftrightarrow uv\in E(G)$ where \mc{R} is a collection of
segments. We say that \mc{R} is a ``segment representation'' of $G$.

\begin{definition}
Let $G$ be a partial 2-tree and let $G_T$ be a 2-tree of which $G$ is a
spanning subgraph. Let \mc{R} be a segment representation of $\overline{G}$ with
segments $\{s_u~|~u\in V(G)\}$. \mc{R} is said to be a segment representation of
$\overline{G}$ that is ``compatible with $G_T$'' if for each $uv\in E(G_T)$,
a ray $r_{uv}$ can be drawn in \mc{R} such that the collection of these rays
satisfies the following properties:
\begin{enumerate}
\item $r_{uv}$ starts from an interior point on one of $s_u$ or $s_v$ and passes
through an end-point of the other and meets no other points of either $s_u$ or
$s_v$.
\item $r_{uv}$ crosses every segment other than $s_u$ and $s_v$, and
\item $r_{uv}$ crosses every ray $r_{xy}$ where $xy\in E(G_T)\setminus
\{uv\}$.
\end{enumerate}
The rays $r_{uv}$ where $uv\in E(G_T)$ shall be called ``special rays''.
\end{definition}

\section{The construction}
\begin{theorem}
If $G$ is a partial 2-tree which is a spanning subgraph of a 2-tree $G_T$,
then $\overline{G}$ has a segment representation that is compatible with $G_T$.
\end{theorem}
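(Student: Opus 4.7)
The plan is to proceed by induction on the recursive construction of the 2-tree $G_T$. For the base case, $G_T$ consists of a single edge $xy$. If $xy \in E(G)$, then $\overline{G}$ is edgeless on two vertices and I represent $x,y$ by two disjoint parallel segments; if $xy \notin E(G)$, then $\overline{G} = K_2$ and I represent $x,y$ by two crossing segments. In either case the single special ray $r_{xy}$ starting at an interior point of $s_x$ and passing through an endpoint of $s_y$ is drawn trivially, and there are no other segments or rays against which to check the compatibility conditions.

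For the inductive step, suppose $G_T$ arises from a smaller 2-tree $G_T'$ by attaching a new vertex $v$ via edges $vx, vy$ with $xy \in E(G_T')$, and let $G' = G - v$; this is a spanning subgraph of $G_T'$. By the inductive hypothesis, $\overline{G'}$ has a compatible representation $\mc{R}'$ containing the special ray $r_{xy}$, and I must add to $\mc{R}'$ a segment $s_v$ and two new rays $r_{vx}, r_{vy}$. The segment $s_v$ must cross every other $s_u$ except, conditionally, $s_x$ and $s_y$ (which it must meet exactly when the corresponding edge is absent from $G$), and it must be crossed by every pre-existing special ray. My strategy is to draw $s_v$ as a long, narrow segment inside an arbitrarily thin tubular neighborhood of $r_{xy}$. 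Since $r_{xy}$ already crosses every segment other than $s_x$ and $s_y$, such an $s_v$ automatically crosses all of them; the four cases for whether $s_v$ meets $s_x$ and $s_y$ are then handled by choosing the side of $r_{xy}$ on which to lay $s_v$ and by extending or retracting its endpoints relative to the points where $r_{xy}$ meets $s_x$ and $s_y$.

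Once $s_v$ is in place, I draw the new special rays $r_{vx}, r_{vy}$ emanating from suitable points of $s_v$, routing each in a direction that forces it to cross every segment and every special ray it is required to cross while avoiding the two forbidden segments; a natural choice is to send one roughly along the forward direction of $r_{xy}$ and the other in approximately the opposite direction. Existing special rays that do not already cross $s_v$ are simply prolonged along their own direction until they do, which is always possible because $s_v$ hugs $r_{xy}$ and every previous special ray already crosses $r_{xy}$.

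The principal obstacle is the bookkeeping in the inductive step: for each of the four configurations of $(vx,vy)$ with respect to $E(G)$, I must simultaneously verify that $s_v$, $r_{vx}$, $r_{vy}$ realize every required crossing, avoid every forbidden one, and that no existing special ray loses its compatibility. Because $s_v$ can be chosen in an arbitrarily thin neighborhood of $r_{xy}$ and the new rays can be angularly perturbed by arbitrarily small amounts, every such combinatorial requirement can be satisfied by a small local geometric adjustment; the work consists in making this case analysis precise and keeping track of all side conditions.
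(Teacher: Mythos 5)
Your proposal is essentially the paper's own proof: the same induction that deletes the degree-2 vertex $v$ of $G_T$, and the same key idea of laying $s_v$ (and the two new special rays $r_{xv}$, $r_{vy}$) inside a thin corridor along the special ray $r_{xy}$ so that they inherit all of its crossings, with the adjacency cases to $x$ and $y$ handled by choosing sides and slightly retracting endpoints; the paper formalizes your ``thin tubular neighborhood'' with two parallel rays $l_1,l_2$ from points $p_1,p_2$ of $s_x$ and the farthest crossing points $q_1,q_2$. One remark to drop: existing special rays cannot be ``prolonged'' (a ray is already infinite), but no prolonging is needed---since every old special ray crosses $r_{xy}$, a sufficiently long $s_v$ in a sufficiently thin corridor is automatically crossed by all of them, and the same corridor placement is what should be invoked to check the conditions you leave implicit, namely that $s_v$ and the new rays cross $r_{xy}$ itself, that $r_{xv}$ and $r_{vy}$ cross each other, and that each new ray starts at an interior point of one of its two segments and passes through an endpoint of the other.
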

\begin{proof}
We shall prove this by induction on $|V(G)|$. There is nothing to prove for
$|V(G)|<2$. If $|V(G)|=2$, then clearly, $\overline{G}$ has a segment
representation compatible with $G_T$ as shown in Figure \ref{figg0g1}.
\begin{figure}
 \center
 \input{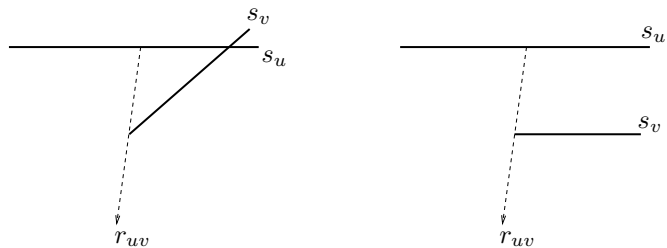}
 \caption{Segment representations that are compatible with $G_T$ for
$\overline{G}$ when $|V(G)|=2$.}
 \label{figg0g1}
\end{figure}
Consider a partial 2-tree $G$ with $|V(G)|>2$.
By definition of a 2-tree, there exists a degree 2 vertex $v$ in $G_T$ with
neighbours $x$ and $y$ such that $xy\in E(G_T)$ and $G_T-\{v\}$ is also a
2-tree. Let $G'$ be $G-\{v\}$. $G'$ is also a partial 2-tree as it is a spanning
subgraph of the 2-tree $G_T-\{v\}$. For ease of notation, we shall denote the
2-tree $G_T-\{v\}$ by $G'_T$. By our induction hypothesis, there is a segment
representation for $\overline{G'}$ that is compatible with $G'_T$. We shall show
how we can extend this to a segment representation for $\overline{G}$ that is
compatible with $G_T$, thereby completing the proof.

Let \mc{R'} be a segment representation of $\overline{G'}$ which is compatible
with $G'_T$ and let $s_u$ denote the segment corresponding to a vertex $u\in
V(\overline{G'})$. We shall add a new segment $s_v$ to \mc{R'} so that we get
the requried segment representation \mc{R} for $\overline{G}$.

Since \mc{R'} is compatible with $G'_T$, there is a special ray $r_{xy}$ in
\mc{R'} which we shall assume without loss of generality starts from an interior
point of $s_x$ and passes through an end-point of $s_y$. Let $p$ be the
starting point of the ray $r_{xy}$ on $s_x$ (refer Figure \ref{figconstr}).
\begin{figure}
\center
\input{constr.pstex_t}
\caption{Starting point of construction}
\label{figconstr}
\end{figure}
$l_1$ and $l_2$ are two parallel rays starting from points $p_1$ and $p_2$ on
$s_x$ on either side of $p$ and parallel to $r_{xy}$ such that they cross every
segment and special ray that $r_{xy}$ crosses. By our definition of crossing,
the ray $r_{xy}$ meets every segment and special ray that it crosses at a point
that is an end-point of neither of them. Therefore, we can always choose rays
$l_1$ and $l_2$ distinct from $r_{xy}$ as long as $s_x$ is not a point or
parallel to $r_{xy}$. Note that by our definition of
$r_{xy}$, $s_x$ cannot lie along $r_{xy}$ and $s_x$ can also not be a point
which ensures that the two rays $l_1$ and $l_2$ distinct from $r_{xy}$ can
be obtained. Of all the crossing points of segments and special rays on $l_1$,
let $q_1$ be the farthest from $p_1$. Let $q_2$ be the similarly defined point
on $l_2$.
Clearly, any segment or special ray that meets the segment $p_1p_2$ and
crosses $q_1q_2$ crosses every segment and special ray that $r_{xy}$ crosses.

\begin{figure}
 \begin{minipage}[b]{0.45\linewidth}
 \input{constr1.pstex_t}
 \caption{The case when $yv\in E(G)$}
 \label{figconstr1}
 \end{minipage}
 \hspace{0.4in}
 \begin{minipage}[b]{0.45\linewidth}
 \input{constr2.pstex_t}
 \vspace{0.15in}
 \caption{The case when $yv\not\in E(G)$}
 \label{figconstr2}
 \end{minipage}
\end{figure}

The segment $s_v$ is placed in the following way in $\mc{R'}$ to obtain
$\mc{R}$:

\medskip

\noindent{\itshape Case 1.} $yv\in E(G)$.
We place $s_v$ as shown in Figure \ref{figconstr1}. As $yv\not\in
E(\overline{G})$, the segment $s_v$ is disjoint from the segment $s_y$. Let us
first consider the case when $xv\not\in E(G)$.
In $\overline{G}$, $v$ is adjacent to all the vertices in $V(G)\setminus
\{v,y\}$. This requirement is satisfied, as since $s_v$ meets $p_1p_2$
and crosses $q_1q_2$, it crosses all the segments that cross $r_{xy}$ and meets
$s_x$ too. Moreover, $s_v$ also crosses all the special rays in \mc{R'}
including $r_{xy}$.
\com{ meaning that the special rays in \mc{R'} are all special rays in
\mc{R} as well.}
We will show that we can now draw two rays
\com{
But for \mc{R} to be a segment representation of $\overline{G}$ that is
compatible with $G_T$, there should exist two rays in \mc{R}, namely}
$r_{vy}$ and $r_{xv}$, so that they, together with the special rays in \mc{R'},
form the collection of special rays in \mc{R} which make it compatible with
$G_T$.
The rays $r_{vy}$ and $r_{xv}$ can be drawn as shown in the figure so that they
cross $s_x$ and $s_y$ respectively. In addition, both of them cross each other
and $r_{xy}$. Since they also meet $p_1p_2$ and cross $q_1q_2$, each of them
crosses all the special rays and segments that cross $r_{xy}$.
Thus $\mc{R}$ is a segment representation of $\overline{G}$ that is compatible
with $G_T$. Note that we
can draw the segment $s_v$ and the rays $r_{vy}$ and $r_{xv}$ in this way as
long as the segments $s_x$ and $s_y$ have non-zero length and $s_y$ does not lie
along the ray $r_{xy}$. But our definitions of segments and special rays ensure
that these pathological situations do not occur. Now, if $xv\in E(G)$, then
$s_v$ can be slightly shortened at the end $p_3$ so that it becomes
disjoint from $s_x$ without affecting any of the other arguments so that we
still obtain the segment representation \mc{R} for $\overline{G}$.

\medskip

\noindent{\itshape Case 2.} $yv\not\in E(G)$.
We place $s_v$, $r_{xv}$ and $r_{vy}$ as shown in Figure \ref{figconstr2}. The
rest of the argument is similar to that of Case 1.
\com{
In both cases, the new special rays $r_{xv}$ and $r_{vy}$ are shown. It easy to
check that these rays are indeed special rays. Now, if $xv\not\in E(G')$, we
can easily shorten the segment $s_v$ so that it does not touch $s_x$ without
affecting changing its adjacencies with any other segment. Thus, we now have a
segment representation of $\overline{G'}$ with $A'=A\cup\{(x,v),(v,y)\}$ as the
active set.
}

\medskip

We thus have a segment representation of $\overline{G}$ which is compatible with
$G_T$. This completes the proof.
\hfill\bbox
\end{proof}

\begin{corollary}
If $G$ is any partial 2-tree, then $\overline{G}$ is a segment graph.
\end{corollary}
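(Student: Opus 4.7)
The plan is to derive the corollary as an essentially immediate consequence of the preceding theorem, since the notion of a ``segment representation compatible with $G_T$'' is by construction a strengthening of the plain notion of ``segment representation''. The only work is unpacking definitions and handling the trivial small cases that the theorem's induction skipped over.

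First I would invoke the definition of a partial 2-tree: by definition, any partial 2-tree $G$ is a spanning subgraph of some 2-tree $G_T$. If $|V(G)| \leq 1$, then $\overline{G}$ has no edges, and an arbitrary collection of pairwise disjoint segments (one per vertex) trivially realises it, so $\overline{G} \in \class{SEG}$. Otherwise $|V(G)| \geq 2$, and the theorem applies to the pair $(G, G_T)$ to produce a segment representation $\mc{R}$ of $\overline{G}$ that is compatible with $G_T$.

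Next I would observe that compatibility with $G_T$ is only an auxiliary property guaranteeing the existence of the special rays $r_{uv}$; the collection $\mc{R} = \{s_u : u \in V(G)\}$ is itself, by definition, a segment representation of $\overline{G}$, regardless of whether we subsequently draw any rays. Hence $\overline{G}$ is the intersection graph of a family of line segments in the plane, i.e.\ $\overline{G} \in \class{SEG}$.

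There is no real obstacle here: the entire difficulty of the paper has been absorbed into the theorem, whose induction needed the stronger compatibility condition in order to attach a new vertex $v$ using the ray $r_{xy}$. The corollary's role is merely to discard the extra structure and record the conclusion in its cleanest form, so the proof will fit in essentially two sentences.
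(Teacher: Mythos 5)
Your proposal is correct and matches the paper's (implicit) argument exactly: the corollary follows by taking the compatible representation produced by the theorem and simply forgetting the special rays, since every partial 2-tree is by definition a spanning subgraph of some 2-tree. The only superfluous step is your $|V(G)|\leq 1$ case, which cannot actually occur because a 2-tree has at least two vertices, but including it does no harm.
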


\begin{corollary}
The complements of series-parallel graphs and outerplanar graphs are segment
graphs.
\end{corollary}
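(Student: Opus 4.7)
The plan is to deduce this corollary directly from Corollary~1, by verifying the structural containment already asserted in the introduction: both series-parallel graphs and outerplanar graphs are partial $2$-trees. Once this is in hand, Corollary~1 applies verbatim to each, and the conclusion follows.

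First I would dispose of the outerplanar case. Every outerplanar graph sits as a spanning subgraph of a \emph{maximal} outerplanar graph on the same vertex set, which is obtained by adding non-crossing chords inside the outer face until no more fit. It then suffices to show by induction on $|V|$ that every maximal outerplanar graph on at least two vertices is a $2$-tree. In the outerplanar embedding, every bounded face is a triangle, and any triangulation of a polygon with $n\geq 3$ sides contains an \emph{ear}, namely a triangle two of whose edges lie on the outer cycle. The apex of such an ear is a degree-$2$ vertex whose two neighbors are joined by an edge; deleting it yields a maximal outerplanar graph on $n-1$ vertices, a $2$-tree by induction, and reattaching the apex is precisely the inductive clause in the definition of a $2$-tree. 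Hence every outerplanar graph is a partial $2$-tree.

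For the series-parallel case, I would appeal to the classical equivalence that the class of series-parallel graphs coincides with the class of graphs of treewidth at most $2$, which in turn is exactly the class of partial $2$-trees. Rather than reprove this folklore correspondence, I would simply cite it.

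With both containments in place, the corollary is immediate: given any series-parallel or outerplanar graph $G$, there exists a $2$-tree $G_T$ of which $G$ is a spanning subgraph, so by the Theorem $\overline{G}$ has a segment representation compatible with $G_T$, and in particular $\overline{G}\in\class{SEG}$. There is no real obstacle here; the construction in the Theorem has already done all of the geometric work, and this corollary is essentially a bookkeeping step that identifies two familiar graph classes as subclasses of partial $2$-trees.
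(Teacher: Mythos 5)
Your proposal is correct and follows the same route as the paper, which simply observes that series-parallel and outerplanar graphs are subclasses of partial $2$-trees and invokes the preceding results; your extra sketches of the containments (the ear argument for maximal outerplanar graphs and the treewidth-$2$ characterization for series-parallel graphs) are standard and sound, just more detailed than the paper's one-line proof.
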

\begin{proof}
Series-parallel graphs and outerplanar graphs are subclasses of partial
2-trees.\hfill\qed
\end{proof}

\end{document}